\newcommand\RR{{\mathbb{R}}}
\newcommand\R{{\mathbb{R}}}
\newcommand\CC{{\mathbb{C}}}
\newcommand\ZZ{\mathbb{ Z}}
\numberwithin{equation}{section}
\newtheorem{theorem}{Theorem}[section] 
\newtheorem{lemma}[theorem]{Lemma}
\newtheorem{corollary}[theorem]{Corollary}
\newtheorem{definition}[theorem]{Definition}
\newtheorem{example}[theorem]{Example}
\newtheorem{remark}[theorem]{Remark}
\newtheorem{case[theorem]}{Case}
\begin{document}

\begin{frontmatter}

\title{Exponential bases, Paley-Wiener spaces and applications}


\author[AI]{Alex Iosevich}

 \address[AI]{University of Rochester University, Rochester, NY 14627, \emph{iosevich$@$math.rochester.edu}} 
 
\author[AM]{Azita  Mayeli (corresponding author) \footnote{This research was partially supported by PSC-CUNY Grant  67736-0045.}}
 
 \address[AM]{City University of New York, The Graduate Center, NY 10016, \emph{amayeli$@$ gc.cuny.edu}}
 
\begin{abstract}  We investigate the connection between translation bases for Paley-Wiener spaces and exponential Fourier bases for a domain. We apply these results to the characterization of vector-valued time-frequency translates of a Paley-Wiener \lq\lq{}window\rq\rq{} signal. 


     
 \end{abstract}    
 
 \begin{keyword}
 Paley-Wiener spaces\sep  Riesz bases \sep  frames\sep  vector-valued Gabor system. 
 \MSC[2010] 41\sep 42
 \end{keyword}

 \end{frontmatter}            

\section{Introduction}\label{Introduction}


The main goal of this paper is to develop a correspondence between exponential basis on a domain in the Euclidean space and   translation bases for Paley-Wiener spaces.   
 Let $\Omega$ be a  Borel  subset of ${\Bbb R}^d$, $d \ge 1$, with positive and finite  measure. We define the Paley-Wiener space $PW_\Omega({\Bbb R}^d)$   on domain $\Omega$ to be the  subspace of  all $L^2({\Bbb R}^d)$ functions $f$ whose Fourier transform are supported in $\Omega$. More precisely, 
 
$$ PW_\Omega(\R^d):= \{ f\in L^2(\R^d): \  \hat f(\xi)= 0  \  \text{for} \ a.e.\  \xi\in \Omega^c\}.$$ 
 
  For $f\in  PW_\Omega(\R^d)$, we say $\hat f$ is nowhere zero  on $\Omega$ if $\hat f(\xi)\neq 0$ for all $\xi\in \Omega$. 
Given  a countable  set $A$ in $\Bbb R^d$ and $\phi \in L^2({\Bbb R}^d)$, we  say the system of  translates    ${ \{ \phi(\cdot-a) \} }_{a \in A}$  is a  Riesz basis for $PW_\Omega(\Bbb R^d)$     if  there exist positive and finite  numbers $C_1,C_2$  such that    for any  $f \in PW_\Omega({\Bbb R}^d)$   there is  ${ \{c_a \}}_{a \in A}\in l^2(A)$  such that 
$$ f(x)=\sum_{a \in A} c_a \phi(x-a) $$
 and  
\begin{align}\label{shift-frame-inequality}
C_1 \sum_{a \in A} |c_a|^2 \leq {||f||}_2^2 \leq C_2 \sum_{a \in A} |c_a|^2.
\end{align}
In this case, the constants $C_1$ and $C_2$ are called Riesz constants. \\

 
 For the rest of this paper, we denote by $A$ a countable set in $\RR^d$ and 
 by $\Omega$  a Borel set of positive and finite measure. And, by $\hat\phi$ we mean the Fourier transform of 
 $\phi$, defined, as usual by 
 $$ \widehat{\phi}(\xi)=\int e^{-2 \pi i x \cdot \xi} \phi(x) dx.$$


   We say  set $A$   is a  {\it Riesz basis  spectrum} for $L^2(\Omega)$ 
 if there exist positive and finite constants  $C_1,C_2$ such that  for any  $f \in L^2(\Omega)$, there is    $\{c_a\}_{a\in A}\in l^2(A)$ such that 
$$ f =\sum_{a \in A} c_a e_a,  $$  
and 
\begin{align}\label{exponential-frame-inequality}
  C_1 \sum_{a \in A} |c_a|^2 \leq {||f||}_{L^2(\Omega)}^2 \leq C_2 \sum_{a \in A} |c_a|^2. 
  \end{align}
Here, $e_a(x) :=  e^{-2 \pi i \langle x , a\rangle}$, $a\in A$, $x\in \Omega$, and $\langle x, a\rangle$ is the inner product in 
$\RR^n$. In this case we call $\Omega$ a {\it Riesz basis spectral set}.  Analogously, we say $A$ is an {\it orthonormal basis spectrum} and $\Omega$ is {\it orthonormal basis spectral set}  if $\{e_a\}_{a\in A}$ is an orthonormal basis for $L^2(\Omega)$. See the seminal paper by Fuglede (\cite{Fu74}) for background on orthonormal exponential bases. \\


In many problems in harmonic analysis, approximation theory, wavelet theory, sampling theory, Gabor systems, and signal processing \cite{BVR1, AG, HW, M, RS95, RS97}, it is interesting to construct a function $\phi\in L^2(\Bbb R^d)$, such that the set  $\{\phi(\cdot-a)\}_a$ is an orthonormal or a Riesz sequence. The structure of $\phi$ for which $\{\phi(\cdot-j)\}_{j\in \ZZ^d}$  is   an orthonormal basis or  Riesz basis  for the closure of its spanned space  in $L^2(\Bbb R^d)$ topology  has been extensively studied in,  for example, \cite{BVR1, BVR2, HSWW, B}.    
The techniques used   in these papers strongly rely on the periodic tiling property of $[0,1]^d$ for $\Bbb R^d$. In general, the same techniques can not be used for the characterization of general translates of $\phi$   when 
 $\Bbb Z^d$  and $[0,1]^d$ are replaced by any countable set $A$ and  any  Borel set $\Omega$, respectively. In this paper we extend the results of the characterization of translate bases where  $A$ {\it does not} necessarily have a group structure.  In our paper, we shall focus our attention only on the class of Paley-Wiener functions $\phi \in PW_\Omega(\Bbb R^d)$. \\
 
This paper is structured as follows. The first main result is stated in  
Theorem \ref{equivalence2} in Section \ref{Introduction}. In Section \ref{first part} we prove Theorem   \ref{equivalence2} and we establish a correspondence between bases of the type $\{\phi(\cdot-a)\}_{a\in A}$ in $L^2(\Bbb R^d)$ and spectrum $A$ for $\Omega$. Moreover, we prove that under some mild assumptions on $\hat\phi$, the necessary and sufficient conditions for $\{\phi(\cdot-a)\}_{a\in A}$  to be a Riesz basis for $PW_\Omega(\Bbb R^d)$ is that $A$ is a Riesz basis  spectrum for $\Omega$. In Section \ref{second part}, we extend the results of Section  \ref{first part}   to general bases, i.e.,  frames. In Section \ref{Paley-Wiener valued Gabor systems}, we use our results to characterize the structure of Paley-Wiener valued Gabor systems. In particular, we use  the result  of Theorem \ref{equivalence2} on the  characterization of exponential functions  in the proof of Theorem \ref{vector-valued bases},   thus illustrating the connection between vector-valued  Gabor frames and exponential bases. 
\\
  


We shall recall the following result about the orthonormal bases of exponentials on a domain. 
 
\begin{theorem} \label{equivalence}  Let $|\Omega|=1$. For $\phi\in L^2(\Bbb R^d)$, the system   $\{ \phi(\cdot-a) \}_{a \in A}$ is an orthonormal basis for $PW_\Omega(\Bbb R^d)$ if and only if $\{e_a\}_{a\in A}$ is an orthonormal basis for $L^2(\Omega)$, provided that   $|\hat\phi(x)|=\chi_\Omega$  for a.e. $x\in \Omega$. 
\end{theorem}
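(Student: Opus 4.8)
The plan is to transport the statement through the Fourier transform $\mathcal F$, which is a unitary operator on $L^2(\R^d)$ and carries translation into modulation. By definition $\mathcal F$ maps $PW_\Omega(\R^d)$ isometrically onto the subspace of $L^2(\R^d)$ consisting of functions supported on $\Omega$, which we identify with $L^2(\Omega)$; it is in fact a unitary isomorphism $PW_\Omega(\R^d)\to L^2(\Omega)$. Moreover $\widehat{\phi(\cdot-a)}(\xi)=e^{-2\pi i a\cdot\xi}\hat\phi(\xi)=e_a(\xi)\,\hat\phi(\xi)$. Since a unitary operator sends an orthonormal basis to an orthonormal basis and conversely, the family $\{\phi(\cdot-a)\}_{a\in A}$ is an orthonormal basis for $PW_\Omega(\R^d)$ if and only if $\{e_a\hat\phi\}_{a\in A}$ is an orthonormal basis for $L^2(\Omega)$. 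This reduces the theorem to comparing $\{e_a\hat\phi\}_{a\in A}$ with $\{e_a\}_{a\in A}$ inside $L^2(\Omega)$.

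Next I would exploit the hypothesis $|\hat\phi|=\chi_\Omega$ a.e. Because $\hat\phi$ vanishes off $\Omega$ and has modulus one almost everywhere on $\Omega$, the multiplication map
\[
M\colon L^2(\Omega)\to L^2(\Omega),\qquad Mg=\hat\phi\,g,
\]
is a well-defined unitary operator: it is isometric since $\|Mg\|_{L^2(\Omega)}^2=\int_\Omega|\hat\phi|^2|g|^2=\int_\Omega|g|^2=\|g\|_{L^2(\Omega)}^2$, and it is onto with inverse $g\mapsto\overline{\hat\phi}\,g$. Observing that $e_a\hat\phi=M e_a$, the family $\{e_a\hat\phi\}_{a\in A}$ is precisely the image of $\{e_a\}_{a\in A}$ under the unitary $M$.

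Finally I would invoke once more the principle that a unitary operator preserves orthonormal bases in both directions: $M$ preserves orthonormality because $\langle Me_a,Me_b\rangle=\langle e_a,e_b\rangle$, and it preserves completeness because $M$ is surjective. Hence $\{e_a\hat\phi\}_{a\in A}=\{Me_a\}_{a\in A}$ is an orthonormal basis for $L^2(\Omega)$ if and only if $\{e_a\}_{a\in A}$ is. Chaining this with the equivalence from the first step yields the theorem. The normalization $|\Omega|=1$ enters only to make orthonormality the correct statement, since $\|e_a\|_{L^2(\Omega)}^2=|\Omega|=1$ and likewise $\|\phi(\cdot-a)\|_2^2=\|\hat\phi\|_2^2=|\Omega|=1$. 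I do not anticipate a genuine obstacle here; the only points deserving care are the identification of $\mathcal F(PW_\Omega(\R^d))$ with $L^2(\Omega)$ and the verification that $M$ is unitary, both of which are routine.
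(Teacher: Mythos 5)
Your proof is correct and rests on the same mechanism as the paper's: transferring the problem through the Fourier transform via Plancherel and using that $|\hat\phi|=1$ a.e.\ on $\Omega$ makes multiplication by $\hat\phi$ harmless. The paper verifies orthogonality and completeness by hand with explicit inner-product computations and basis expansions, whereas you package the same facts as a composition of two unitaries ($\mathcal F$ restricted to $PW_\Omega(\R^d)$ and the multiplier $M$), which is a cleaner organization of an essentially identical argument.
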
 
 
 
Note that by the above theorem we can recover the  Whittaker-Shannon-Kotel\rq{}nikov Theorem which states 
that the sequence 
$$\left\{sinc(x -n) = \cfrac{\sin\pi(x-n)}{\pi(x -n)}: \ n\in \Bbb Z \right\}$$ is an orthonormal basis for $PW_{[-1/2,1/2]}(\Bbb R)$,   the space of all functions in $L^2(\Bbb R)$  with Fourier support in $[-1/2,1/2]$. \\

 The following result proved by Lagarias, Reed and Wang (\cite{LRW00}) and, independently, by A. Iosevich and S. Pederson 
  (\cite{AP}) characterizes all orthonormal basis spectrum $A$ for  $d$-dimensional cube $\mathcal Q^d= [0,1]^d$. Their results are  equivalent to  those  in Theorem \ref{equivalence} when   $\Omega$ is the cube.

 \begin{theorem}[\cite{AP}]\label{AP} Let $A$ be a subset of $\Bbb R^d$. Then $A$ is an orthonormal basis spectrum 
 for the  $d$-dimensional unite cube $\mathcal Q^d$ if and only if $A$ is a tiling set for
  the cube $\mathcal Q^d$. 
 \end{theorem}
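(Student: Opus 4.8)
The plan is to translate both properties into Fourier-analytic conditions on the indicator $\chi_{\mathcal{Q}^d}$, whose transform
$$\widehat{\chi_{\mathcal{Q}^d}}(\xi)=\prod_{j=1}^d\frac{1-e^{-2\pi i\xi_j}}{2\pi i\xi_j}$$
has zero set $Z=\{\xi\in\RR^d:\ \xi_j\in\ZZ\setminus\{0\}\ \text{for some}\ j\}$, and to recall that $A$ being a tiling set means $\sum_{a\in A}\chi_{\mathcal{Q}^d}(x-a)=1$ for a.e.\ $x$. First I would record the orthonormality side by the elementary product computation
$$\langle e_a,e_b\rangle_{L^2(\mathcal{Q}^d)}=\widehat{\chi_{\mathcal{Q}^d}}(a-b)=\prod_{j=1}^d\int_0^1 e^{-2\pi i x_j(a_j-b_j)}\,dx_j ,$$
which vanishes precisely when some coordinate difference $a_j-b_j$ is a nonzero integer. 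Hence $\{e_a\}_{a\in A}$ is orthonormal if and only if $(A-A)\setminus\{0\}\subseteq Z$; call this condition \textbf{(O)}. For completeness, since finite linear combinations of $\{e_\xi:\xi\in\RR^d\}$ are dense in $L^2(\mathcal{Q}^d)$ and the Bessel map is continuous, an orthonormal system $\{e_a\}$ is a basis iff Parseval holds on each $e_\xi$, i.e.
$$\sum_{a\in A}\big|\widehat{\chi_{\mathcal{Q}^d}}(\xi-a)\big|^2=\|e_\xi\|_{L^2(\mathcal{Q}^d)}^2=1\qquad\text{for all }\xi;$$
call this \textbf{(C)}. A volume count in either \textbf{(C)} or the tiling identity shows that a spectrum and a tiling set both have Beurling density one, a fact I would use freely below.

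For the direction \emph{spectrum $\Rightarrow$ tiling}, which is the soft one, I would argue as follows. Condition \textbf{(O)} forces $|a_j-b_j|\ge 1$ for some $j$ whenever $a\neq b$, so the cubes $\{\mathcal{Q}^d+a\}_{a\in A}$ have pairwise disjoint interiors and $\sum_{a\in A}\chi_{\mathcal{Q}^d}(x-a)\le 1$ a.e. Averaging this packing function over $[-R,R]^d$ and letting $R\to\infty$ gives, by density one, a limit equal to $1$; combined with the pointwise bound $\le 1$ this forces the packing function to equal $1$ a.e., which is exactly the tiling property.

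For the converse \emph{tiling $\Rightarrow$ spectrum}, set $\mu=\sum_{a\in A}\delta_a$, a tempered distribution since $A$ has density one. Tiling reads $\chi_{\mathcal{Q}^d}*\mu=1$, i.e. $\widehat{\chi_{\mathcal{Q}^d}}\,\hat\mu=\delta_0$, which forces $\operatorname{supp}\hat\mu\subseteq Z\cup\{0\}$. To deduce \textbf{(C)} I would write, with $T=\chi_{\mathcal{Q}^d}*\widetilde{\chi_{\mathcal{Q}^d}}$ the tent autocorrelation (so $\widehat{T}=|\widehat{\chi_{\mathcal{Q}^d}}|^2$ and $T(c)=\prod_j(1-|c_j|)_+$),
$$\sum_{a\in A}\big|\widehat{\chi_{\mathcal{Q}^d}}(\xi-a)\big|^2=(\widehat{T}*\mu)(\xi);$$
since $T$ vanishes on $Z$ while $T(0)=1$, and $\operatorname{supp}\hat\mu\subseteq Z\cup\{0\}$, the Fourier transform $T\hat\mu$ of this sum concentrates at the origin, and density one pins its mass to $1$, giving \textbf{(C)}. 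The remaining and hardest point is orthonormality \textbf{(O)}: a translational cube tiling must force every pair in $A$ to differ by a nonzero integer in \emph{some} coordinate. The plan is to exploit that $Z\cup\{0\}\subseteq\bigcup_j\{\xi_j\in\ZZ\}$, so that $\operatorname{supp}\hat\mu$ lies in a union of integer hyperplanes and $\mu$ splits into pieces that are $1$-periodic in a single coordinate; slicing $\RR^d$ along these hyperplanes and inducting on $d$ — the base case $d=1$ being that every translational tiling of $\RR$ by $[0,1]$ is a coset $\alpha+\ZZ$ — should yield \textbf{(O)}. This structural/inductive step is the main obstacle, and it is precisely the place where the special geometry of the cube is essential, since the spectral--tiling equivalence fails for general domains $\Omega$; here the product, integer-hyperplane form of $Z$ is what makes the difference-set condition \textbf{(O)} recoverable from the support condition on $\hat\mu$.
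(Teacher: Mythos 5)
You should first be aware that the paper does not prove this statement: Theorem \ref{AP} is quoted from \cite{AP} (and, independently, \cite{LRW00}) as background, so there is no in-paper argument to compare against, and what you are attempting is a proof of the full Iosevich--Pedersen/Lagarias--Reeds--Wang cube theorem. Your Fourier-analytic setup is the standard and correct one: the reduction of orthonormality to the difference-set condition \textbf{(O)} via the zero set $Z$ of $\widehat{\chi_{\mathcal{Q}^d}}$, the completeness criterion \textbf{(C)} in the form $\sum_a|\widehat{\chi_{\mathcal{Q}^d}}(\xi-a)|^2=1$, and the packing/density bookkeeping are all right. But the proposal has two genuine gaps, one in each direction.

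In the direction \emph{spectrum $\Rightarrow$ tiling}, the step ``the packing function is $\le 1$ a.e., its averages over $[-R,R]^d$ tend to $1$, hence it equals $1$ a.e.'' is not a valid inference: a function bounded by $1$ whose large-box averages tend to $1$ can still vanish on a set of positive finite measure (delete one cube from a tiling --- the result is a packing of Beurling density one that is not a tiling). Since your argument for this direction uses completeness only through the density count, it would ``prove'' that every density-one cube packing tiles, which is false; completeness must be used pointwise (e.g.\ via the identity $\sum_a|\widehat{\chi_{\mathcal{Q}^d}}(\xi-a)|^2=1$ evaluated at well-chosen $\xi$, or by testing against indicator functions of a putative uncovered open set). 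In the direction \emph{tiling $\Rightarrow$ spectrum}, the entire content of the theorem is compressed into the final sentences: the claim that in \emph{any} translational cube tiling every pair $a\ne b$ in $A$ satisfies $a_j-b_j\in\ZZ\setminus\{0\}$ for some $j$ is exactly the hard structural result of \cite{LRW00} and \cite{AP}, and ``slicing along integer hyperplanes and inducting on $d$'' is not yet an argument --- you do not state the inductive hypothesis, nor how the support condition $\operatorname{supp}\hat\mu\subseteq Z\cup\{0\}$ yields a decomposition of $\mu$ into pieces periodic in a single coordinate (high-dimensional cube tilings are combinatorially wild; cf.\ the failure of Keller's conjecture for $d\ge 8$, which rules out naive face-matching inductions). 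There is also a technical problem in your derivation of \textbf{(C)}: the tent $T$ is merely Lipschitz and vanishes only to first order along parts of $Z$ (e.g.\ at points with exactly one coordinate equal to $\pm1$), so from $\operatorname{supp}\hat\mu\subseteq Z\cup\{0\}$ you cannot immediately conclude $\operatorname{supp}(T\hat\mu)\subseteq\{0\}$ without controlling the transverse order of $\hat\mu$ along those hyperplanes. These are not polish issues; they are the places where the actual proofs in the cited papers do their work.
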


 For a function $u$, we let $\|u\|_\infty$ and $\|u\|_0$ denote the supremum and infinitum of $|u|$ on $\Omega$, respectively. Our next result can be viewed as a general version of Theorem \ref{equivalence}. 
 
\begin{theorem} \label{equivalence2} 
Let $\phi\in PW_\Omega(\Bbb R^d)$ and  $0<\|\hat\phi\|_0\leq \|\hat\phi\|_\infty<\infty$.   Then ${ \{ \phi(x-a) \} }_{a \in A}$ is  Riesz basis  for $PW_\Omega({\Bbb R}^d)$ if and only if $\{e_a\}_{a\in A}$ is   Riesz basis for $L^2(\Omega)$. In this case, the associated Reisz constants 
for $\{e_a\}_{a\in A}$  and
   ${ \{ \phi(x-a) \} }_{a \in A}$ are equal and 
  $C_1=\|\hat\phi\|_0$ and $C_2=\|\hat\phi\|_\infty$. 
\end{theorem}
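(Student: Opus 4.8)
The plan is to transfer the entire problem to the Fourier side, where translation becomes modulation and the window $\phi$ becomes a multiplication operator. Since the Fourier transform is a unitary map of $L^2(\RR^d)$ onto itself that carries $PW_\Omega(\RR^d)$ isometrically onto $L^2(\Omega)$, and since $\widehat{\phi(\cdot-a)}(\xi)=e_a(\xi)\,\hat\phi(\xi)$, the system $\{\phi(\cdot-a)\}_{a\in A}$ is a Riesz basis for $PW_\Omega(\RR^d)$ if and only if $\{e_a\hat\phi\}_{a\in A}$ is a Riesz basis for $L^2(\Omega)$, with identical Riesz constants. Thus it suffices to compare the two systems $\{e_a\}_{a\in A}$ and $\{e_a\hat\phi\}_{a\in A}$ inside $L^2(\Omega)$.

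The hypothesis $0<\|\hat\phi\|_0\le\|\hat\phi\|_\infty<\infty$ is exactly what is needed to make the multiplication operator $M_{\hat\phi}\colon g\mapsto \hat\phi\,g$ a bounded, boundedly invertible operator on $L^2(\Omega)$, its inverse being multiplication by $1/\hat\phi$, which is bounded because $|\hat\phi|\ge\|\hat\phi\|_0>0$ a.e.\ on $\Omega$. Since $M_{\hat\phi}e_a=e_a\hat\phi$, the second system is the image of the first under an invertible bounded operator, and the Riesz basis property is preserved under such maps in both directions; this already yields the stated equivalence. This is the conceptual heart of the argument, but to pin down the precise constants I would argue directly rather than invoke the operator abstractly.

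For the direct argument I would fix $f\in PW_\Omega(\RR^d)$ and set $g:=\hat f/\hat\phi$, which lies in $L^2(\Omega)$ precisely because $1/\hat\phi$ is bounded. The key observation is that a coefficient sequence $\{c_a\}\in l^2(A)$ represents $f$ as $\sum_a c_a\phi(\cdot-a)$ if and only if the same sequence represents $g$ as $\sum_a c_ae_a$ in $L^2(\Omega)$, because applying the Fourier transform turns the first identity into $\hat f=\sum_a c_a e_a\hat\phi$ and dividing by $\hat\phi$ yields the second. Hence the existence of synthesizing coefficients transfers verbatim between the two systems. The norm comparison then comes from the single identity
\begin{equation*}
\|f\|_2^2=\int_\Omega|\hat\phi(\xi)|^2\,|g(\xi)|^2\,d\xi,
\end{equation*}
which is pinched between $\|\hat\phi\|_0^2\,\|g\|_{L^2(\Omega)}^2$ and $\|\hat\phi\|_\infty^2\,\|g\|_{L^2(\Omega)}^2$. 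Feeding the Riesz inequalities \eqref{exponential-frame-inequality} for $g$ into this pinch produces the Riesz inequalities \eqref{shift-frame-inequality} for $f$, with the window bounds $\|\hat\phi\|_0$ and $\|\hat\phi\|_\infty$ folding into the constants; running the same computation backwards gives the converse direction.

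The main obstacle I anticipate is the bookkeeping of constants so that they match the exact form asserted in the statement, and ensuring that the two-sided bound is not weakened when the exponential Riesz constants and the window bounds are combined. In particular, the lower estimate requires that the inner bound $\|\hat\phi\|_0>0$ genuinely controls $\|g\|$ from above in terms of $\|f\|$; this is where the hypothesis $\|\hat\phi\|_0>0$ is indispensable, and dropping it would destroy the lower Riesz bound. Everything else — convergence of the series, the equality of the two coefficient spaces, and the preservation of completeness — is routine once the Fourier-side dictionary $\phi(\cdot-a)\leftrightarrow e_a\hat\phi$ and the identity for $\|f\|_2^2$ are in place.
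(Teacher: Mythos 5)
Your proposal is correct and follows essentially the same route as the paper: both pass to the Fourier side via $\widehat{\phi(\cdot-a)}=e_a\hat\phi$, form the quotient $u=\hat f/\hat\phi$ (and its inverse construction $\hat f=u\hat\phi$ for the converse), and obtain the Riesz bounds by pinching $\|f\|_2^2=\int_\Omega|\hat\phi|^2|u|^2$ between $\|\hat\phi\|_0^2\|u\|^2$ and $\|\hat\phi\|_\infty^2\|u\|^2$. Your remark that the multiplication operator $M_{\hat\phi}$ is bounded and boundedly invertible is just a cleaner packaging of the same estimates, and your worry about constant bookkeeping is well placed, since the constants that actually come out are $C_1\|\hat\phi\|_0^2$ and $C_2\|\hat\phi\|_\infty^2$ rather than the literal values asserted in the theorem's last sentence.
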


As a consequence of Theorem \ref{equivalence2} we have the following results.

  \begin{corollary}\label{equivalence2-2} %
Let  $\phi\in PW_\Omega(\Bbb R^d)$ such that  $\hat \phi$ is continuous and nowhere zero  on $\Omega$.  If  $\Omega$ is compact,   then ${ \{ \phi(x-a) \} }_{a \in A}$ is  Riesz basis  for $PW_\Omega({\Bbb R}^d)$ if and only if $\{e_a\}_{a\in A}$ is Riesz basis for $L^2(\Omega)$.
 \end{corollary}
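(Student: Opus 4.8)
The plan is to deduce this corollary directly from Theorem~\ref{equivalence2} by verifying that its standing hypothesis $0<\|\hat\phi\|_0\le\|\hat\phi\|_\infty<\infty$ follows from the assumptions made here. Recall that $\|\hat\phi\|_\infty$ and $\|\hat\phi\|_0$ denote the supremum and infimum of $|\hat\phi|$ over $\Omega$, so the entire task reduces to producing uniform two-sided bounds on $|\hat\phi|$ on $\Omega$.

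First I would invoke compactness. Since $\hat\phi$ is continuous on $\Omega$, the map $\xi\mapsto|\hat\phi(\xi)|$ is continuous on the compact set $\Omega$ and hence, by the extreme value theorem, attains both its supremum and its infimum at points of $\Omega$. Writing $M=\max_{\xi\in\Omega}|\hat\phi(\xi)|$ and $m=\min_{\xi\in\Omega}|\hat\phi(\xi)|=|\hat\phi(\xi_0)|$ for some $\xi_0\in\Omega$, we immediately obtain $\|\hat\phi\|_\infty=M<\infty$. For the lower bound I would use the nowhere-vanishing hypothesis: since $\hat\phi(\xi)\ne 0$ for every $\xi\in\Omega$, in particular $\hat\phi(\xi_0)\ne 0$, so $m=|\hat\phi(\xi_0)|>0$ and therefore $\|\hat\phi\|_0=m>0$. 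Combining the two yields $0<\|\hat\phi\|_0\le\|\hat\phi\|_\infty<\infty$, which is precisely the hypothesis required to apply the earlier theorem. With this in hand, Theorem~\ref{equivalence2} applies verbatim and gives the stated equivalence, namely that $\{\phi(\cdot-a)\}_{a\in A}$ is a Riesz basis for $PW_\Omega(\mathbb R^d)$ if and only if $\{e_a\}_{a\in A}$ is a Riesz basis for $L^2(\Omega)$, with matching Riesz constants $\|\hat\phi\|_0$ and $\|\hat\phi\|_\infty$.

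There is no genuine obstacle in this argument; the one point worth flagging is why \emph{compactness} cannot be dropped. The pointwise condition \lq\lq nowhere zero\rq\rq{} alone guarantees only $|\hat\phi|>0$ at each point, which does not prevent $\inf_\Omega|\hat\phi|=0$ when $\Omega$ is merely bounded or has a boundary along which $\hat\phi$ decays. It is precisely compactness of $\Omega$, ensuring that the infimum is attained, that upgrades pointwise positivity to the uniform lower bound $\|\hat\phi\|_0>0$ demanded by the quantitative Riesz-basis inequality \eqref{exponential-frame-inequality}. That uniform lower bound is the delicate half of the hypothesis of Theorem~\ref{equivalence2}, and continuity together with compactness is exactly the clean sufficient condition that secures it; the finite upper bound $\|\hat\phi\|_\infty<\infty$ is then automatic from the same extreme value argument.
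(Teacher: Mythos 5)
Your proof is correct and is exactly the argument the paper intends: the corollary is stated as a direct consequence of Theorem~\ref{equivalence2}, and the only content is the extreme value theorem step showing that continuity of $\hat\phi$ on the compact set $\Omega$, together with the nowhere-vanishing hypothesis, yields $0<\|\hat\phi\|_0\le\|\hat\phi\|_\infty<\infty$. Your remark on why compactness cannot be dropped is a sensible addition but not a deviation from the paper's route.
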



\begin{corollary} Let $A$ be a Riesz   spectrum for $\Omega$. Then for any $u\in L^2(\Omega)$, the set $\{u e_a\}_a$ is a Riesz basis for $L^2(\Omega)$ if and only if 
$0<\|u\|_0\leq \|u\|_\infty<\infty$. 
\end{corollary}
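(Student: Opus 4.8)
The plan is to transport the statement to the translation side via the Fourier transform and then read it off from Theorem \ref{equivalence2} together with an elementary fact about multiplication operators. I would let $\phi\in PW_\Omega(\R^d)$ be the function with $\hat\phi=u$ on $\Omega$ and $\hat\phi=0$ off $\Omega$; this is legitimate since $u\in L^2(\Omega)$, and it satisfies $\|\hat\phi\|_0=\|u\|_0$ and $\|\hat\phi\|_\infty=\|u\|_\infty$. Because $\widehat{\phi(\cdot-a)}(\xi)=e^{-2\pi i\langle\xi,a\rangle}\hat\phi(\xi)=u(\xi)e_a(\xi)$ and the Fourier transform is a unitary map from $PW_\Omega(\R^d)$ onto $L^2(\Omega)$, the system $\{\phi(\cdot-a)\}_{a\in A}$ is a Riesz basis for $PW_\Omega(\R^d)$ if and only if $\{u\,e_a\}_{a\in A}$ is a Riesz basis for $L^2(\Omega)$, with identical Riesz constants.

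For the forward implication I would assume $0<\|u\|_0\le\|u\|_\infty<\infty$, which is exactly the hypothesis $0<\|\hat\phi\|_0\le\|\hat\phi\|_\infty<\infty$ needed by Theorem \ref{equivalence2}. Applying that theorem to this $\phi$, the translates $\{\phi(\cdot-a)\}_{a\in A}$ form a Riesz basis for $PW_\Omega(\R^d)$ if and only if $\{e_a\}_{a\in A}$ is a Riesz basis for $L^2(\Omega)$; since $A$ is a Riesz spectrum for $\Omega$ the latter holds, hence so does the former, and by the correspondence above $\{u\,e_a\}_{a\in A}$ is a Riesz basis for $L^2(\Omega)$.

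The converse is where the work lies, because one cannot simply invoke Theorem \ref{equivalence2}: its standing hypothesis $0<\|\hat\phi\|_0\le\|\hat\phi\|_\infty<\infty$ is precisely the conclusion being sought. Instead I would argue operator-theoretically. Since both $\{e_a\}_{a\in A}$ and $\{u\,e_a\}_{a\in A}$ are Riesz bases for $L^2(\Omega)$, the transition map $T$ determined by $Te_a=u\,e_a$ is well defined (unique expansions), bounded, and boundedly invertible: the Riesz bounds of the two systems yield two-sided estimates for $T$, and completeness of $\{u\,e_a\}_{a\in A}$ gives surjectivity. On every trigonometric polynomial $p=\sum_{\text{finite}}c_a e_a$ one has $Tp=u\,p$, so $T$ coincides with the multiplication operator $M_u$ on the dense subspace of finite combinations. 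The key step is to upgrade this to $u\in L^\infty(\Omega)$: given $f\in L^2(\Omega)$, choose trigonometric polynomials $p_n\to f$; from $\|u(p_n-p_m)\|_2=\|T(p_n-p_m)\|_2\le\|T\|\,\|p_n-p_m\|_2$ the sequence $\{u\,p_n\}$ is Cauchy and converges to some $g$, and since the maximal multiplication operator $M_u$ is closed, $p_n\to f$ and $u\,p_n\to g$ force $uf=g\in L^2(\Omega)$ with $\|uf\|_2\le\|T\|\,\|f\|_2$. Hence $M_u$ is bounded, $T=M_u$, and $\|u\|_\infty\le\|T\|<\infty$. Finally, $T=M_u$ being invertible makes $M_u$ bounded below, and testing against normalized indicators of $\{|u|<1/n\}$ shows these sets are null for large $n$, giving $\|u\|_0>0$.

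The main obstacle is exactly this last passage in the converse: promoting the identity $T=M_u$ from the dense subspace of trigonometric polynomials to all of $L^2(\Omega)$. Multiplication by an $L^2$ function is not continuous, so naive $L^2$-approximation does not transfer the bound; the point is to exploit the boundedness of $T$ to make $\{u\,p_n\}$ Cauchy and then invoke the closedness of the multiplication operator. Everything else, namely the Fourier correspondence, the application of Theorem \ref{equivalence2}, and the characterization of invertible multiplication operators, is routine.
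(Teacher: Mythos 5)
Your proof is correct. The paper offers no argument for this corollary beyond presenting it as a consequence of Theorem \ref{equivalence2}, and you have rightly observed that this only accounts for half of the statement: the forward implication follows exactly as you describe, by realizing $u$ as $\hat\phi$ for a $\phi\in PW_\Omega(\R^d)$ and transporting Theorem \ref{equivalence2} through the unitary correspondence $\phi(\cdot-a)\leftrightarrow u\,e_a$ (an even shorter route: when $0<\|u\|_0\le\|u\|_\infty<\infty$ the operator $M_u$ is bounded and boundedly invertible on $L^2(\Omega)$, and such operators carry Riesz bases to Riesz bases, so no detour through $PW_\Omega$ is needed). The converse genuinely cannot be read off from Theorem \ref{equivalence2}, since there the two-sided bound on $\hat\phi$ is a standing hypothesis rather than a conclusion; your operator-theoretic argument --- identifying the transition operator $T$ between the two Riesz bases with multiplication by $u$ on the dense span of the $e_a$, using boundedness of $T$ together with closedness of the maximal multiplication operator to conclude $u\in L^\infty(\Omega)$ with $\|u\|_\infty\le\|T\|$, and then using invertibility of $T=M_u$ and the indicator functions of $\{|u|<1/n\}$ to force $\|u\|_0>0$ --- is the standard and correct way to supply the missing direction. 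The only caveat, which the paper itself glosses over, is that its ``expansion'' definition of Riesz basis must be read in the standard sense (two-sided $\ell^2$ bounds valid for all coefficient sequences, equivalently the image of an orthonormal basis under a bounded invertible operator) for the two-sided estimates on $T$ and its inverse to be immediate.
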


\section{Proofs  of Theorems \ref{equivalence} and \ref{equivalence2}}\label{first part}

\begin{proof}[{\bf Proof of Theorem \ref{equivalence}}.] The proof is straightforward, but we write it down for the sake of completeness. Assume that   $\{e_a\}$ is an orthonormal basis for  $L^2(\Omega)$.  
 Let $\phi\in PW_{\Omega}(\Bbb R^d)$ such that $|\hat\phi(x)|=1$ a.e. $x\in \Omega$. 
 We prove that 
 $\{L_a \phi\}$ is an orthonormal basis for $PW_\Omega(\Bbb R^d)$. 
  By the Parseval identity, the orthogonality of  $\{\phi(\cdot -a)\}$ is obtained by the orthogonality of  $\{e_a\}$ and that $|\hat\phi| = 1$ on $\Omega$: 
  
  \begin{align}\label{PI}
  \langle \phi(\cdot -a) , \phi(\cdot -a)\rangle =   \langle e_a, e_{a\rq{}}\rangle =\delta_{a, a\rq{}}. 
  \end{align}
 (Here, $\delta$ is  Kronecker delta.)
 Let $f$ be a function in $PW_\Omega(\Bbb R^d)$. Then  $\hat f\in L^2(\Omega)$ and hence 
 
 $$\hat f= \sum_a \langle \hat f, e_a\rangle   e_a = \sum_a \langle \hat f, \chi_\Omega e_a\rangle  \  \chi_\Omega e_a $$
 
 By the inverse Fourier transform and the Parseval identity  we have 
 $$ f = \sum_a \langle f, L_a \phi\rangle L_a \phi .$$

  Conversely, assume that $\{L_a\phi\}$ is an ONB for $PW_\Omega(\Bbb R^d)$. Again, the orthogonality of $\{e_a\}$ is obtained from the relation (\ref{PI}). Let $u\in L^2(\Omega)$. Then for $\check u$,  the inverse Fourier transform of $u$, we have 
  
  $$\check u = \sum_a \langle \check u, L_a \phi\rangle L_a\phi,$$
  
hence by the Fourier transform 
   $$u = \sum_a \langle u, e_a\rangle  e_a, $$
   
   as desired. 
 \end{proof}

For  a compact and symmetric convex domain $\Omega$ in the plane,   in \cite{AKT}, the first author, with Katz and Tao,  proved that a set $A$ is an orthonormal  basis spectrum for $\Omega$ if and only if $\Omega$ tiles $\Bbb R^2$ by translations. As corollary of their result along  Theorem \ref{equivalence},
in the following corollary we illustrate the relation between orthonormal translation bases and tiling property in $\RR^2$.

  \begin{corollary} 
  Let $\Omega$ be any compact convex domain in the plane and $A$ be a set in $\Bbb R^2$.  Then for any $\phi\in PW_\Omega(\Bbb R^d)$  with  Fourier transform  support in $\Omega$, the system of translations $\{\phi(\cdot-a)\}_A$ is orthonormal and complete in $PW_\Omega(\Bbb R^2)$ if and only if $A$ is a tiling set for $\Omega$, provided that $|\Omega|=1$ and $\hat \phi =\chi_\Omega$, a.e. 
  \end{corollary}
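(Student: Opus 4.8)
The plan is to obtain the corollary by composing Theorem~\ref{equivalence} with the spectral characterization of convex planar domains from \cite{AKT}, which is recalled immediately above the statement. First I would check that the hypotheses of Theorem~\ref{equivalence} are met in the present setting. The assumptions $|\Omega|=1$ and $\hat\phi=\chi_\Omega$ a.e.\ give in particular $|\hat\phi(x)|=\chi_\Omega(x)$ for a.e.\ $x$, which is exactly the condition required there (with $d=2$; the symbol $\R^d$ in the statement should read $\R^2$). Hence Theorem~\ref{equivalence} applies without modification.

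Next I would use Theorem~\ref{equivalence} to rewrite the hypothesis on the translates as a statement about exponentials. The phrase ``orthonormal and complete in $PW_\Omega(\R^2)$'' is synonymous with ``orthonormal basis for $PW_\Omega(\R^2)$,'' so Theorem~\ref{equivalence} yields the equivalence
$$\{\phi(\cdot-a)\}_{a\in A}\ \text{is an ONB for}\ PW_\Omega(\R^2)\ \Longleftrightarrow\ \{e_a\}_{a\in A}\ \text{is an ONB for}\ L^2(\Omega).$$
By the definition given in Section~\ref{Introduction}, the right-hand condition says precisely that $A$ is an orthonormal basis spectrum for $\Omega$.

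Finally I would invoke the Iosevich--Katz--Tao result: for a compact convex domain $\Omega$ in the plane, being an orthonormal basis spectrum is equivalent to tiling, so $A$ is a spectrum for $\Omega$ if and only if $A$ is a tiling set for $\Omega$ (that is, $\{\Omega+a:a\in A\}$ tiles $\R^2$). Chaining this equivalence with the one from the previous step gives the desired biconditional, completing the argument.

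The step demanding the most care is the last one. The version of \cite{AKT} quoted in the text is stated for \emph{symmetric} convex domains and in the existence form ``$\Omega$ admits a spectrum iff $\Omega$ tiles $\R^2$''; to upgrade this to the pointwise correspondence ``the given set $A$ is a spectrum iff $A$ is a tiling set'' one needs the identification of spectra with tiling sets for convex planar tiles, exactly as Theorem~\ref{AP} provides for the cube. I would therefore make this spectrum-versus-tiling-set identification explicit, and wherever symmetry of $\Omega$ is actually used I would either impose it or replace \cite{AKT} by the corresponding Fuglede-type equivalence valid for all convex planar domains.
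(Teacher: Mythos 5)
Your proposal follows precisely the paper's own (implicit) argument: the corollary is presented as a direct consequence of chaining Theorem~\ref{equivalence} with the Iosevich--Katz--Tao result recalled just above it, which is all the paper offers by way of proof. Your closing caveat is well taken, and it exposes a looseness in the paper itself rather than in your reasoning --- the text quotes \cite{AKT} for \emph{symmetric} compact convex domains and in a mixed existence/pointwise form, yet the corollary is stated for arbitrary compact convex planar domains with a pointwise spectrum-versus-tiling-set correspondence of the kind Theorem~\ref{AP} supplies for the cube, so that final identification genuinely requires the additional justification you describe.
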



\begin{proof}[{\bf Proof of Theorem \ref{equivalence2}}] Assume that $A$ is a  Riesz  spectrum  for $\Omega$ and  $C_1$ and $C_2$ are the associated   Riesz constants. 
Let  $\phi\in PW_\Omega(\Bbb R^d)$ for which   $0<\|\hat\phi\|_0\leq \|\hat\phi\|_\infty<\infty$,   
and let   $f\in PW_\Omega(\Bbb R^d)$. Then put 
 $$u :=\cfrac{\hat f}{\hat \phi}. $$ 
  $u$ is in $L^2(\Omega)$ and for some  $\{c_a\}_{a\in A}\in l^2(A)$    
$$ u  = \sum_{a\in A} c_a e_a,  $$
such that 
 \begin{align}\label{exponential-frame-ineq}
 C_1 \sum_a |c_a|^2 \leq \|u\|^2_{L^2(\Omega)} \leq C_2  \sum_a |c_a|^2 .
 \end{align}
 
We will show  that $f(x) = \sum_a c_a \ \phi(x-a)$   in  $L^2(\Bbb R^d)$  and the inequality (\ref{shift-frame-inequality}) holds for  some other constants  
$0< C_1\rq{} \leq C_2\rq{}<\infty$.    We proceed  as following.  Let $B$ be a finite subset of $A$. Then 
\begin{align}\notag
\| f - \sum_{a\in B} c_a \phi(\cdot - a) \|_{PW_\Omega(\Bbb R^d)}^2 &= \| \hat f - \sum_{a\in B} c_a e_a \hat \phi\|_{L^2(\Omega)}^2
\\\notag
 &= \int_\Omega \left| \hat f(x) - \sum_{a\in B} c_a \hat\phi(x) e_a(x)\right|^2 dx\\\notag
  &= \int_\Omega \left| \hat\phi(x)\left(u(x) - \sum_{a\in B} c_a e_a(x)\right)\right|^2 dx\\\notag
  & \leq 
 \|\hat\phi\|_\infty^2 
 \|  u - \sum_{a\in B} c_a e_a \|^2 
\end{align}

Since the above inequality holds for any finite subset of $A$ and since   
$u = \sum_{a\in A} c_a e_a$, then   $f = \sum_{a\in A} c_a L_a\phi$ holds. It remains to show that the inequality (\ref{shift-frame-inequality}) holds for $f$.  Note that 
$\| f\|^2 = \|\hat f\|^2 = \|u \hat \phi \|^2$. Therefore 
by the upper estimation in 
 (\ref{exponential-frame-ineq})   
we  will have 

\begin{align}\notag
\| f\|^2  
\leq   \|\hat\phi\|_\infty^2\| u \|^2  
  \leq    C_2 
 \|\hat\phi\|_\infty^2 \sum_a |c_a|^2
\end{align}
 
With a similar argument, one can show that 
$$\| f\|^2 \geq   C_1   \|\hat\phi\|_0^2   \sum_a |c_a|^2.$$

Consequently we have 

\begin{align}\notag
 C_1\rq{}  \sum_a |c_a|^2 \leq \|f\|^2 \leq  C_2\rq{} \sum_a |c_a|^2
 \end{align}
 for $ C_1\rq{}  = C_1   \|\hat\phi\|_0^2 $ and $C_2\rq{} = C_2 
 \|\hat\phi\|_\infty^2$. \\

Conversely, suppose  that $\{\phi(\cdot - a)\}_{a\in A}$ is a  Riesz basis  for $PW_\Omega(\Bbb R^d)$ with   constants $m_\phi, M_\phi>0$.  We prove that $\{e_a\}_{a\in A}$ is a Riesz basis for $L^2(\Omega)$. 
Let $u\in L^2(\Omega)$ and  
define $f = \breve{u} \ast \phi$. (Here, $\breve{u}$ is the Fourier inverse function of $u$.) The Fourier transform $\hat f$ has   support in $\Omega$ and   by the assumptions we have    $f = \sum_a c_a \phi(\cdot -x)$ for some $\{c_a\}_{a\in A} \in l^2(A)$, such that    

\begin{align}\label{another-inequality}
m_\phi\sum_a |c_a|^2 \leq \|f\|2 \leq  M_\phi \sum_a |c_a|^2 .  
\end{align}
To show that   $u = \sum_a c_a e_a$ and the Riesz  inequality (\ref{exponential-frame-inequality}) holds for  $u$,  once again 
 we let $B$ be a finite subset of $A$. Then

\begin{align}\notag
\|u-\sum_{a\in B} c_a e_a\| &= \int_\Omega |u(x) - \sum_{a\in B} c_a e_a(x)|^2 dx \\\notag
&= \int |\hat \phi(x)|^{-2} \left|u(x) \hat \phi(x) -  \sum_{a\in B} c_a \hat \phi(x) e_a(x)\right|^2 dx \\\notag
&\leq  \|\hat\phi\|_0^{-2} \|u \hat \phi - \sum_{a\in B} \hat \phi e_a\|^2 \\\notag
&= \|\hat\phi\|_0^{-2} \| \breve{u}\ast \phi -\sum_{a\in B}  \phi(\cdot - a)\|^2\\\notag
& = \|\hat\phi\|_0^{-2} \| f-\sum_{a\in B}  \phi(\cdot - a)\|^2 .
\end{align} 
 
 The above estimation holds for any finite subset of $A$. Therefore, since  $f= \sum_{a\in A}  \phi(\cdot - a)$, we conclude that  $u=\sum_{a\in A} c_a e_a$. \\
 
To prove 
the  inequality (\ref{exponential-frame-inequality}) 
for $u$, first we apply 
the upper  estimation in  (\ref{another-inequality}) as  follows.  
  
  \begin{align}\notag
  \|u\|^2   \leq \|\hat\phi\|_0^{-2} \| u \hat \phi   \|^2= \|\hat\phi\|_0^{-2}  \| \breve{u}\ast \phi\|^2 \leq 
 M_\phi  \|\hat\phi\|_0^{-2}    \sum_a |c_a|^2.
  \end{align}
 (Recall that $\breve{u}\ast \phi\ =f$ and  $*$  denotes the convolution operation.)
With a similar calculation, one can show that 
$$\|u\|^2\geq m_\phi\|\hat\phi\|_\infty^{-2}   \sum_a |c_a|^2. $$

This  completes the proof of  (\ref{exponential-frame-inequality}) for the function $u$ with the Riesz basis constants $  M_\phi\|\hat\phi\|_0^{-2}   $ and $
m_\phi \|\hat\phi\|_\infty^{-2}   $.  Hence, the proof of the theorem is completed. 
\end{proof}

 
 \subsection{Application}
 
In Theorem \ref{equivalence} we observed that for 
  $\hat\psi = \chi_{\Omega}$ with $|\Omega|=1$, the   Paley-Wiener space $PW_{\Omega}(\Bbb R^d)$ is exactly  the reproducing kernel Hilbert space $L^2(\RR)\ast \psi$ and  A-translates of $\psi$ form an ONB (orthonormal basis)  for  this space if and only if $\{e_a\}$ is an ONB for $L^2(\Omega)$.   Motivated by this observation,   here we study  the cases where   for $\psi\in L^2(\Bbb R^d)$, the Hilbert space $L^2(\Bbb R^d)\ast \psi$   has a translate orthonormal basis or a translate Riesz basis.  We will narrow our attention to     the situation   where  $\psi$ is a band-limited function (i.e., it has bounded Fourier transform support),  and 
  $L^2(\Bbb R^d)\ast \psi$ is a Paley-Wiener space.  
  First we need the following lemma. 
\begin{lemma} 
For any $\psi\in PW_\Omega$,  there holds  $PW_\Omega= L^2(\Bbb R^d)\ast \psi$, provided    $0<\|\hat\psi\|_0<\infty$. 
\end{lemma}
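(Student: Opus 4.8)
The plan is to pass to the Fourier side and reduce the asserted convolution identity to a pointwise division, exploiting that $\hat\psi$ is bounded away from zero on $\Omega$. Under the Fourier transform, which is an isometry of $L^2(\mathbb{R}^d)$ by Plancherel, the space $PW_\Omega(\mathbb{R}^d)$ corresponds exactly to $L^2(\Omega):=\{h\in L^2(\mathbb{R}^d): h=0 \text{ a.e. on } \Omega^c\}$, and convolution turns into multiplication, $\widehat{g\ast\psi}=\hat g\,\hat\psi$. Since $\psi\in PW_\Omega$, the factor $\hat\psi$ is supported in $\Omega$; and since $|\Omega|<\infty$, Cauchy--Schwarz gives $\hat\psi\in L^1(\Omega)$, so $\psi$ is bounded and continuous and $g\ast\psi$ is well defined pointwise for every $g\in L^2(\mathbb{R}^d)$, being the inverse transform of the $L^1$ function $\hat g\,\hat\psi$. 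Throughout I read $L^2(\mathbb{R}^d)\ast\psi$ as the set of those convolutions that actually lie in $L^2(\mathbb{R}^d)$, i.e.\ as a subspace of $L^2(\mathbb{R}^d)$.

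For the inclusion $L^2(\mathbb{R}^d)\ast\psi\subseteq PW_\Omega$ I take $g\in L^2(\mathbb{R}^d)$ with $g\ast\psi\in L^2(\mathbb{R}^d)$; then its transform $\hat g\,\hat\psi$ vanishes off $\operatorname{supp}\hat\psi\subseteq\Omega$, so $g\ast\psi\in PW_\Omega$. This direction uses only $\psi\in PW_\Omega$ and not the lower bound on $\hat\psi$. For the reverse inclusion $PW_\Omega\subseteq L^2(\mathbb{R}^d)\ast\psi$, which is where the hypothesis $\|\hat\psi\|_0>0$ enters, I take $f\in PW_\Omega$ and define $g$ through $\hat g:=(\hat f/\hat\psi)\,\chi_\Omega$ (with $\hat g:=0$ off $\Omega$). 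The lower bound yields the pointwise estimate $|\hat g(\xi)|\le\|\hat\psi\|_0^{-1}\,|\hat f(\xi)|$ a.e.\ on $\Omega$, whence $\|\hat g\|_2\le\|\hat\psi\|_0^{-1}\|\hat f\|_2<\infty$ and $g\in L^2(\mathbb{R}^d)$. By construction $\hat g\,\hat\psi=\hat f$ on $\Omega$ and both sides vanish off $\Omega$, so $\widehat{g\ast\psi}=\hat f$; inverting the Fourier transform gives $f=g\ast\psi$, and the membership $g\ast\psi\in L^2(\mathbb{R}^d)$ is confirmed since its transform $\hat f$ is.

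The only genuinely delicate point is the bookkeeping around the meaning of $L^2(\mathbb{R}^d)\ast\psi$: because $\|\hat\psi\|_\infty$ is not assumed finite, the product $\hat g\,\hat\psi$ of two $L^2(\Omega)$ factors is a priori only in $L^1(\Omega)$, so not every $g\ast\psi$ need lie in $L^2(\mathbb{R}^d)$. I expect the main work to be making precise that $L^2(\mathbb{R}^d)\ast\psi$ is to be understood as a subspace of $L^2(\mathbb{R}^d)$ and that the division construction above always lands inside it. Once the hypothesis $\|\hat\psi\|_0>0$ is invoked to control $\hat f/\hat\psi$, both inclusions follow from Plancherel and the convolution theorem with no further estimates, and the two together give $PW_\Omega=L^2(\mathbb{R}^d)\ast\psi$.
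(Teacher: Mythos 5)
Your proof is correct and takes essentially the same route as the paper's: pass to the Fourier side, get the forward inclusion from the support of $\hat\psi$, and get the reverse inclusion by defining $\hat g = \hat f/\hat\psi$ on $\Omega$ and bounding $\|\hat g\|_2 \le \|\hat\psi\|_0^{-1}\|\hat f\|_2$. The only difference is that you make explicit the bookkeeping about when $g\ast\psi$ lies in $L^2(\mathbb{R}^d)$, which the paper leaves implicit.
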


\begin{proof} Since  $\psi$ has Fourier support in $\Omega$,  then by the Fourier transform      $L^2(\RR^d)\ast \psi \subseteq PW_\Omega$. 
 For the converse, let 
 $f\in PW_\Omega$. By  our assumption on $\hat\psi$, we have  $\|\hat f/\hat \psi\|\in L^2(\Omega)$ with $\|\hat f/\hat \psi\|\leq \|\hat f\| / \|\hat\psi\|_0$.  
 This implies that for some $g\in L^2(\RR^d)$, 
  $f= g\ast \psi$,   and hence the proof of the lemma is completed. 
\end{proof}


The following result is a direct consequence of the preceding  lemma. 
 
 \begin{corollary}\label{translate basis for rkHs}  
 Let $\phi, \psi\in PW_\Omega$ such   that 
  $0<\|\hat\phi\|_0\leq \|\hat\phi\|_\infty<\infty$  and $0<\|\hat\psi\|_0<\infty$.  Then set
    $A$ is a Riesz basis spectrum for $\Omega$ if and only if 
 $\{L_a\phi\}_{a\in A}$ is a Riesz basis for    $L^2(\Bbb R^d)\ast \psi$.   
 \end{corollary}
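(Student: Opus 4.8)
The plan is to chain together the preceding Lemma with Theorem~\ref{equivalence2}, since Corollary~\ref{translate basis for rkHs} is explicitly advertised as a direct consequence of the Lemma.

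First I would invoke the Lemma to the function $\psi$: because $\psi\in PW_\Omega$ and $0<\|\hat\psi\|_0<\infty$, the Lemma gives the identification of Hilbert spaces $L^2(\Bbb R^d)\ast\psi = PW_\Omega$. This is an equality of spaces, not merely an inclusion, so the translation operators $L_a$ act the same way on both descriptions and the $L^2(\Bbb R^d)$ norms agree. Consequently, the statement ``$\{L_a\phi\}_{a\in A}$ is a Riesz basis for $L^2(\Bbb R^d)\ast\psi$'' is literally the same statement as ``$\{L_a\phi\}_{a\in A}$ is a Riesz basis for $PW_\Omega(\Bbb R^d)$.''

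Next I would apply Theorem~\ref{equivalence2} to $\phi$. The hypothesis $0<\|\hat\phi\|_0\leq\|\hat\phi\|_\infty<\infty$ is exactly the hypothesis required by that theorem, so Theorem~\ref{equivalence2} asserts the equivalence
\begin{equation}\notag
\{L_a\phi\}_{a\in A}\text{ is a Riesz basis for }PW_\Omega(\Bbb R^d)
\iff
\{e_a\}_{a\in A}\text{ is a Riesz basis for }L^2(\Omega).
\end{equation}
By definition, the right-hand condition is precisely that $A$ is a Riesz basis spectrum for $\Omega$. Combining this with the space identification from the Lemma closes the chain of equivalences and yields the corollary. I would write the argument as a short two-line deduction: use the Lemma to replace $L^2(\Bbb R^d)\ast\psi$ by $PW_\Omega$, then cite Theorem~\ref{equivalence2} to trade the translation-basis property of $\phi$ for the spectral property of $A$.

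There is essentially no genuine obstacle here, since all the hard analytic work has already been done in the Lemma and in Theorem~\ref{equivalence2}. The only point requiring a little care is to confirm that the hypotheses on $\psi$ and on $\phi$ are being used in the right places: the condition on $\psi$ (only $\|\hat\psi\|_0>0$ and finite) is what powers the Lemma's space identification, whereas the two-sided bound on $\hat\phi$ is what makes Theorem~\ref{equivalence2} applicable. One should also note that the roles of $\phi$ and $\psi$ are asymmetric---$\psi$ merely fixes the ambient reproducing-kernel space while $\phi$ supplies the actual generating translates---so the two distinct sets of hypotheses are each needed and cannot be interchanged.
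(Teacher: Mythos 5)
Your proposal is correct and is exactly the argument the paper intends: the Lemma identifies $L^2(\Bbb R^d)\ast\psi$ with $PW_\Omega(\Bbb R^d)$ using the hypothesis on $\hat\psi$, and Theorem~\ref{equivalence2} then converts the Riesz-basis property of $\{L_a\phi\}_{a\in A}$ into the spectral property of $A$ using the two-sided bound on $\hat\phi$. The paper gives no written proof beyond calling the corollary a direct consequence of the Lemma, and your two-step chain is precisely that deduction spelled out.
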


 \begin{remark}
  A trivial consequence of the Corollary \ref{translate basis for rkHs} is that   for any  $\phi\in PW_\Omega(\RR^d)$ with $0<\|\hat\phi\|_0\leq \|\hat\phi\|_\infty<\infty$, the system   $\{L_a\phi\}_{a\in A}$ is a Reisz basis  for $L^2(\RR^d)\ast \phi$, if the set $A$ is a Riesz spectrum for $\Omega$. 
  
 \end{remark}

  \section{Exponential frames}\label{second part}

 The study of exponential frames was initiated by Duffin and Schaeffer in their work on non-harmonic Fourier series (\cite{DS}).  
The existence of exponential frames on $L^2(\Omega)$, for  Lebesgue measurable set  $\Omega$  in $\Bbb R$,  is 
 also known to be equivalent to the sampling and interpolation problems on the Paley-Wiener space $PW_\Omega(\Bbb R)$ (see e.g. \cite{Y, Lan, Ja}). For the most recent work for sampling and interpolation of Paley-Wiener (band-limited) functions on  the locally compact abelian groups see e.g.  \cite{Gro, Ga}, and for the results on non-commutative settings see e.g.    \cite{CM}. In this section,  we 
 show that the existence of exponential frames on $L^2(\Omega)$ is equivalent to the existence of  translate basis  for Paley-Wiener spaces. 

 \begin{definition}
Given  $A\subset \RR^d$ and  $\phi \in L^2({\Bbb R}^d)$, we  say that the translation system $\{\phi(\cdot-a)\}_{a \in A}$ is a  {\it frame} for $PW_\Omega({\Bbb R}^d)$  if  there exist  positive  and finite  constants $C_1\leq  C_2$ such that for any   $f \in PW_\Omega({\Bbb R}^d)$   there holds

 $$C_1  \|f\|^2\leq  \sum_{a\in A}|\langle f, \phi(\cdot-a)\rangle|^2 \leq C_2 \|f\|^2.$$

Similarly,   $\{e_a\}_{a\in A}$ is a frame for $L^2(\Omega)$ if  there exist positive  and finite  constants $C_1\leq  C_2$  such that for any  given $f \in L^2(\Omega)$   

$$C_1  \|f\|^2\leq  \sum_{a\in A}|\langle f, e_a\rangle|^2 \leq C_2 \|f\|^2 .$$
A frame is called Parseval if $C_1=C_2$.

\end{definition}

For a necessary and sufficient conditions for  translate frames (resp. Riesz basis) with the spectrum set $A\subseteq \ZZ$ see e.g. \cite{CCK}. \\

Notice that any Riesz basis is a frame, but the converse dose not always hold.   For examples of  frames containing no Riesz bases we  invite the reader  to  see the monograph  \cite{OC} by Christensen. 
To state the first result of this section, we  
  need the following definition. For $\phi\in PW_\Omega(\RR^d)$   let 
    $$ E_\phi= \{ x\in \Omega: \hat\phi(x)\neq 0\}.$$

\begin{theorem}\label{equivalence3} Given $A\subset \Bbb R^d$ and  $\Omega\subset \Bbb R^d$, 
the system    $\{e_a\}_{a\in A}$ is a frame   for $L^2(\Omega)$ if and only if  for any  $\phi\in PW_\Omega(\Bbb R^d)$   the system 
 ${ \{ \phi(x-a) \} }_{a \in A}$ is frame  for $PW_\Omega({\Bbb R}^d)$,  provided that there exist positive  and finite constants $m\leq  M$ such that  $m\leq  |\hat\phi(x)|^2 \leq M$   for a.e. $x\in E_\phi$.  
\end{theorem}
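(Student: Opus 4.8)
The plan is to run the frame analogue of the argument used for Theorem \ref{equivalence2}, replacing the synthesis (Riesz basis) inequalities by the analysis (frame) inequalities and transporting them across the Fourier transform. Everything is driven by one identity coming from Parseval: for $f\in PW_\Omega(\Bbb R^d)$ and $a\in A$, since $\widehat{\phi(\cdot-a)}=e_a\hat\phi$ and $\hat f$ is supported in $\Omega$,
\begin{align}\notag
\langle f,\phi(\cdot-a)\rangle=\langle \hat f,e_a\hat\phi\rangle=\int_\Omega \hat f(x)\,\overline{\hat\phi(x)}\,\overline{e_a(x)}\,dx=\langle \hat f\,\overline{\hat\phi},\,e_a\rangle_{L^2(\Omega)}.
\end{align}
Summing, this gives
\begin{align}\notag
\sum_{a\in A}|\langle f,\phi(\cdot-a)\rangle|^2=\sum_{a\in A}\big|\langle \hat f\,\overline{\hat\phi},\,e_a\rangle_{L^2(\Omega)}\big|^2,
\end{align}
so the frame operator of the translation system is conjugate, through multiplication by $\overline{\hat\phi}$, to the frame operator of $\{e_a\}$. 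The remainder is bookkeeping with the bounds $m\le|\hat\phi|^2\le M$.

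For the direction assuming $\{e_a\}$ is a frame for $L^2(\Omega)$ with bounds $C_1\le C_2$, I would fix $f\in PW_\Omega(\Bbb R^d)$, set $g:=\hat f\,\overline{\hat\phi}\in L^2(\Omega)$, and apply the exponential frame inequalities to $g$. Then I would sandwich $\|g\|_{L^2(\Omega)}^2=\int_\Omega|\hat f|^2|\hat\phi|^2$ between $m\|f\|^2$ and $M\|f\|^2$ using $m\le|\hat\phi|^2\le M$, which upgrades the exponential bounds into translation-frame bounds $mC_1$ and $MC_2$. For the converse, assuming $\{\phi(\cdot-a)\}$ is a frame for $PW_\Omega(\Bbb R^d)$, I would invert the substitution: given $g\in L^2(\Omega)$, define $f\in PW_\Omega(\Bbb R^d)$ by $\hat f:=g/\overline{\hat\phi}$, which lies in $L^2$ precisely because $|\hat\phi|^2\ge m>0$ forces $|\hat f|\le m^{-1/2}|g|$; then $\hat f\,\overline{\hat\phi}=g$, the identity above turns the translation-frame sum for $f$ into the exponential-frame sum for $g$, and the bounds $m,M$ once more convert the frame constants.

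The delicate point, and the reason the statement is phrased through $E_\phi=\{x\in\Omega:\hat\phi(x)\ne0\}$, is the behaviour of $\hat\phi$ on its zero set. Multiplication by $\overline{\hat\phi}$ annihilates every function supported on $\Omega\setminus E_\phi$: if $|\Omega\setminus E_\phi|>0$, then any nonzero $f$ with $\hat f$ supported there is orthogonal to all $\phi(\cdot-a)$, so the lower translation-frame bound must fail, and symmetrically $\hat f:=g/\overline{\hat\phi}$ cannot be defined on $\Omega\setminus E_\phi$. Hence the equivalence genuinely requires $E_\phi=\Omega$ up to a null set, i.e. $\hat\phi\ne0$ a.e. on $\Omega$; in the converse direction this is automatic, since the translate frame property forbids a nonzero $f\in PW_\Omega(\Bbb R^d)$ orthogonal to every $\phi(\cdot-a)$, whereas in the forward direction it should be read into the hypothesis on $\phi$. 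I expect that reconciling the stated control of $\hat\phi$ on $E_\phi$ with this requirement -- verifying that no $L^2$-mass is created or destroyed on the vanishing set of $\hat\phi$ -- is the main obstacle, the frame-constant computations themselves being routine.
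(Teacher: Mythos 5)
Your proposal follows essentially the same route as the paper's proof: the Parseval identity $\sum_{a}|\langle f,\phi(\cdot-a)\rangle|^2=\sum_{a}|\langle \hat f\,\overline{\hat\phi},e_a\rangle_{L^2(\Omega)}|^2$, the substitution $u=\hat f\,\overline{\hat\phi}$ (the paper's $u:=\hat f\hat\phi\chi_{E_\phi}$) in the forward direction and $\hat f=u/\overline{\hat\phi}$ (the paper's $h:=\chi_{E_\phi}\,u/\overline{\hat\phi}$) in the converse, with the bounds $m\le|\hat\phi|^2\le M$ converting the frame constants exactly as you describe. The ``delicate point'' you isolate is genuine but is not resolved in the paper either: the inequalities \eqref{frame3} and \eqref{frame4} tacitly assume that $\hat f$ (respectively $u$) is supported in $E_\phi$, i.e.\ that $|\Omega\setminus E_\phi|=0$, which is precisely the reading you propose.
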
  

\begin{proof} 
 For   $f\in PW_\Omega(\Bbb R^d)$, put   $u:=\hat f  \hat\phi \chi_{E_\phi}$. By the assumption on $\hat\phi$, 
  we will then  have 
  \begin{align}\label{frame3}
 m\|f\|^2 \leq \|u\|^2\leq M \|f\|^2,
 \end{align}
 
 and by the Parseval identity, 

\begin{align}\label{frame1}
\sum_a\left|\langle f, L_a\phi\rangle\right|^2 &= \sum_a \left|\langle u, e_a\rangle\right|^2. 
\end{align}
If $\{e_a\}_{a\in A}$ is a frame for $L^2(\Omega)$ with constants $C_1$ and $C_2$, then by (\ref{frame1})  

\begin{align}\label{frame2}
C_1\|u\|^2\leq \sum_a\left|\langle f, L_a\phi\rangle\right|^2 \leq C_2\|u\|^2 .
\end{align}
  
A combination of the inequalities in (\ref{frame2}) and (\ref{frame3}) proves that 
    $\{L_a\phi\}$ is a frame for $PW_\Omega(\Bbb R^d)$.  Conversely, assume that  $\{L_a\phi\}$ is a frame for $PW_\Omega(\Bbb R^d)$ and $u\in L^2(\Omega)$. 
    Put $h:= \chi_{E_\phi}\cfrac{u}{\bar{\hat\phi}}$. By the assumptions, 
     $h$ belongs to $L^2(\Omega)$. If we let  $f$ denote the inverse Fourier transform of $h$, i.e, $\hat f =h$, then 
    \begin{align}\label{frame4}
    \sum_a \left|\langle f, L_a\phi\rangle\right|^2 = \sum_a \left|\langle h,  \hat\phi e_a\rangle\right|^2 = \sum_a \left|\langle u,  e_a\rangle\right|^2.
    \end{align}
 If $C_1$ and $C_2$  are the  frame constants for  $\{L_a\phi\}$, 
then by  (\ref{frame4}) we will have 
    \begin{align}\label{frame5}
   C_1\|f\|^2\leq   \sum_a \left|\langle u,  e_a\rangle\right|^2 \leq C_2 \|f\|^2 .
    \end{align}
    
From the other hand we have 
    $\|f\|= \|h\| = \left\|\chi_{E_\phi}\cfrac{u}{\bar{\hat\phi}}\right\| $ and    $m\leq  |\hat\phi(x)|^2 \leq M$   for a.e. $x\in E_\phi$.  
    Therefore
 
    \begin{align}\label{frame6}
 M^{-1} \|u\|^2\leq  \|f\|^2 \leq m^{-1} \|u\|^2 .
    \end{align}
    By interfering (\ref{frame6}) in  (\ref{frame5}) the frame condition holds for  $\{e_a\}$, and this completes the proof of the theorem.
\end{proof}

\begin{remark}Note that what distinguishes Theorem \ref{equivalence3} from \ref{equivalence2} is that for frames we  only require  $\hat\phi$ to be non-vanishing on  a subset of 
   $\Omega$. 
   \end{remark}



In the following  we  give an example of a frame with a  smooth generator. 
 
 \begin{example} Let $u$ be a bump function on $\Bbb R$ with compact support $[0,1]$. Take any compact set $\Omega\subseteq [0,1]$ such that $u$ is away from zero on $\Omega$ (i.e.,  $\inf_{x\in \Omega} |u(x)|>0$). Let $\breve{u}$ denote the inverse Fourier transform of $u$.    Put   $\phi:= \breve{u}\ast \breve{\chi_\Omega}$. 
 Then $\phi$ 
  is smooth  with $E_\phi= \Omega$, and $\{\phi(\cdot -n)\}_{n\in \Bbb Z}$ forms  a frame for $PW_\Omega(\Bbb R)$ if and only if $A$ is a frame spectrum for $\Omega$.  
 \end{example}

 We conclude this section with the following corollary.

\begin{corollary}
Let $\Omega\subset \Bbb R^d$ be a set with finite positive measure. Then a set $A$ is a Parseval frame spectrum for $L^2(\Omega)$ if and only if for any $\phi\in PW_\Omega(\Bbb R^d)$ the system of  translates $\{\phi(\cdot-a)\}_A$ is a Parseval frame, provided that $\hat \phi$ is nowhere zero on $\Omega$.  
\end{corollary}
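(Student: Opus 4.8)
The plan is to read this off Theorem \ref{equivalence3} by specializing to tight (``Parseval'', i.e.\ $C_1=C_2$) frames and using that the nowhere-vanishing hypothesis forces $E_\phi=\Omega$. The mechanism is the identity (\ref{frame1}) from the proof of that theorem: writing $\widehat{\phi(\cdot-a)}=e_a\hat\phi$ and applying Plancherel gives $\sum_{a\in A}|\langle f,\phi(\cdot-a)\rangle|^2=\sum_{a\in A}|\langle u,e_a\rangle|^2$ with $u:=\hat f\,\overline{\hat\phi}\,\chi_{E_\phi}$. First I would note that $\hat\phi\neq0$ everywhere on $\Omega$ makes $E_\phi=\Omega$, so $u=\hat f\,\overline{\hat\phi}$ on all of $\Omega$ and $\|u\|^2=\int_\Omega|\hat\phi(x)|^2|\hat f(x)|^2\,dx$; granting the upper and lower bounds on $|\hat\phi|$ needed to apply Theorem \ref{equivalence3}, the map $f\mapsto u$ is then a bijection of $PW_\Omega(\RR^d)$ onto $L^2(\Omega)$, so every test function on one side matches exactly one on the other, which is what lets the equivalence run in both directions.

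For the reverse implication I would simply feed the hypothesis a convenient unimodular window. Taking $\phi$ with $\hat\phi=\chi_\Omega$ (nowhere zero on $\Omega$, with $|\hat\phi|\equiv1$) the identity collapses to $\sum_a|\langle f,\phi(\cdot-a)\rangle|^2=\sum_a|\langle\hat f,e_a\rangle|^2$ and $\|u\|=\|f\|$; hence if the translates of \emph{this} $\phi$ form a Parseval frame with constant $C$, then $\sum_a|\langle v,e_a\rangle|^2=C\|v\|^2$ for every $v\in L^2(\Omega)$, i.e.\ $A$ is a Parseval frame spectrum. Since the hypothesis asks the translate system to be Parseval for \emph{any} admissible $\phi$, we are free to test it on this single window.

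For the forward implication I would carry the constants through the computation rather than through the bare inequalities (\ref{frame3}). If $\{e_a\}$ is tight with constant $C$, then $\sum_a|\langle u,e_a\rangle|^2=C\|u\|^2$, whence $\sum_a|\langle f,\phi(\cdot-a)\rangle|^2=C\int_\Omega|\hat\phi(x)|^2|\hat f(x)|^2\,dx$. The crux, and the step I expect to be the main obstacle, is that this quantity is a fixed multiple of $\|f\|^2=\int_\Omega|\hat f|^2$ for \emph{every} $f$ precisely when $|\hat\phi|^2$ is a.e.\ constant on $\Omega$; mere non-vanishing yields only the two-sided bound $C\,\|\hat\phi\|_0^2\,\|f\|^2\le\sum_a|\langle f,\phi(\cdot-a)\rangle|^2\le C\,\|\hat\phi\|_\infty^2\,\|f\|^2$, which is tight exactly when $\|\hat\phi\|_0=\|\hat\phi\|_\infty$. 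Hence preserving the Parseval (tightness) property forces $|\hat\phi|$ to be constant, and the clean statement with the common constant $C$ carried across corresponds to the normalization $|\hat\phi|\equiv1$ on $\Omega$ --- the frame-theoretic analogue of the hypothesis $|\hat\phi|=\chi_\Omega$ in Theorem \ref{equivalence}. Once $|\hat\phi|$ is constant the forward direction is immediate from the identity, and the only routine checks remaining are the $L^2(\Omega)$-membership of $u$ and the boundedness of $\hat\phi$ above and below needed to invoke Theorem \ref{equivalence3}.
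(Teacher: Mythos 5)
Your route---reading the statement off Theorem \ref{equivalence3} via the Plancherel identity $\sum_a|\langle f,\phi(\cdot-a)\rangle|^2=\sum_a|\langle u,e_a\rangle|^2$ with $u=\hat f\,\overline{\hat\phi}$ on $\Omega$---is exactly the intended one; the paper states this corollary with no written proof, as a direct specialization of that theorem. Your reverse implication is complete and correct: since the hypothesis quantifies over all admissible windows, testing it on the single window $\hat\phi=\chi_\Omega$ (nowhere zero on $\Omega$, and in $L^2$ because $|\Omega|<\infty$) collapses the identity to $\sum_a|\langle\hat f,e_a\rangle|^2$ with $\|u\|=\|f\|$, so Parsevalness transfers verbatim.

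More importantly, the obstacle you flag in the forward direction is not a step you failed to complete---it is a genuine defect in the statement as printed. If $\{e_a\}_{a\in A}$ is Parseval with constant $C$, the identity gives $\sum_a|\langle f,\phi(\cdot-a)\rangle|^2=C\int_\Omega|\hat\phi|^2|\hat f|^2$, and by localizing $\hat f$ on small subsets of $\Omega$ this is a fixed multiple of $\|f\|^2$ for \emph{every} $f\in PW_\Omega(\RR^d)$ if and only if $|\hat\phi|$ is a.e.\ constant on $\Omega$. Mere non-vanishing does not even guarantee the bounds $0<\|\hat\phi\|_0\le\|\hat\phi\|_\infty<\infty$ needed to invoke Theorem \ref{equivalence3} at all, let alone preserve tightness. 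The corollary becomes true (and your argument becomes a complete proof) only after strengthening ``$\hat\phi$ is nowhere zero on $\Omega$'' to ``$|\hat\phi|$ equals a nonzero constant a.e.\ on $\Omega$,'' the frame-theoretic analogue of the hypothesis $|\hat\phi|=\chi_\Omega$ in Theorem \ref{equivalence}. Your proposal is therefore correct as mathematics; the statement it was asked to prove is not.
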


 \section{Paley-Wiener valued Gabor systems}\label{Paley-Wiener valued Gabor systems}
 
 In this section, we shall apply the results of  previous sections to characterize vector-valued Gabor systems that are  orthonormal basis for a class of vector-valued signals. Before we state our results, we need to introduce some notations here. 
 \\
 
   For  given measurable space $(X, \mu)$  and Hilbert space $Y$,  the space $\mathcal L: = L^2(X,Y, \mu)$ is defined as class of all equivalent and measurable functions $F: X\to Y$ for which

 $$\|F\|_{\mathcal L}^2= \int_x \|F(x)\|_{Y}^2 d\mu(x) <\infty .$$
 
 $ \mathcal L$ is a Hilbert space with the norm $\|\ \|_{\mathcal L}$  and  the inner product

 $$\langle F, G\rangle_{\mathcal L}  = \int_X \langle F(x), G(x) \rangle_Y  \ d\mu(x) .$$ 
 To avoid any confusion, in the sequel, we shall use subscripts for all inner products for the Hilbert spaces. 
  
 \begin{lemma}\label{mixed orthonormal bases} Let $(X,\mu)$ be a measurable space,  and $\{f_n\}_n$ be an orthonormal basis for $L^2(X):=L^2(X, d\mu)$. Let 
$Y$ be a Hilbert space and $\{g_m\}_m$ be a  family in  $Y$. For any $m, n$ and $x\in M$ define $G_{m,n}(x) := f_n(x) g_m$. Then $\{G_{m,n}\}_{m,n}$ is an orthonormal basis for the Hilbert space $\mathcal L$ if and if $\{g_m\}_m$ is an orthonormal basis for $Y$. 
\end{lemma}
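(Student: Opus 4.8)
The plan is to split the biconditional into an orthonormality part and a completeness part, and to notice that the orthonormality part is already an \emph{iff} that falls out of one inner-product computation. Writing the $\mathcal{L}$-inner product of two generators and pulling the scalars $f_n(x),f_{n'}(x)$ out of the $Y$-inner product, I would get
\begin{align}\notag
\langle G_{m,n}, G_{m',n'}\rangle_{\mathcal{L}}
&= \int_X f_n(x)\,\overline{f_{n'}(x)}\,\langle g_m, g_{m'}\rangle_Y\, d\mu(x) \\\notag
&= \langle g_m, g_{m'}\rangle_Y\,\langle f_n, f_{n'}\rangle_{L^2(X)} = \langle g_m, g_{m'}\rangle_Y\,\delta_{n,n'}.
\end{align}
Since $\{f_n\}$ is orthonormal, this shows $\{G_{m,n}\}$ is an orthonormal family precisely when $\langle g_m, g_{m'}\rangle_Y=\delta_{m,m'}$, i.e. precisely when $\{g_m\}$ is orthonormal in $Y$. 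Everything beyond this is the equivalence of the two completeness statements.

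For completeness I would argue both implications directly. First, assuming $\{g_m\}$ is an orthonormal basis for $Y$, I take $F\in\mathcal{L}$ with $\langle F, G_{m,n}\rangle_{\mathcal{L}}=0$ for all $m,n$ and aim to conclude $F=0$. Fixing $m$, I set $h_m(x):=\langle F(x), g_m\rangle_Y$; Cauchy--Schwarz gives $|h_m(x)|\le\|F(x)\|_Y$, so $h_m\in L^2(X)$, and the same manipulation as above identifies $\langle F, G_{m,n}\rangle_{\mathcal{L}}$ with $\langle h_m, f_n\rangle_{L^2(X)}$. Orthogonality against every $f_n$ together with completeness of $\{f_n\}$ forces $h_m=0$ in $L^2(X)$, i.e. $\langle F(x),g_m\rangle_Y=0$ for $\mu$-a.e.\ $x$. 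Ranging over all $m$ and discarding the union of the exceptional null sets, I obtain a single full-measure set on which $\langle F(x),g_m\rangle_Y=0$ for every $m$ at once; completeness of $\{g_m\}$ then gives $F(x)=0$ a.e., hence $F=0$.

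For the converse, assuming $\{G_{m,n}\}$ is a basis, I take $y\in Y$ with $\langle y, g_m\rangle_Y=0$ for all $m$ and test against the single function $F(x):=f_{n_0}(x)\,y$ for a fixed index $n_0$; this lies in $\mathcal{L}$ with $\|F\|_{\mathcal{L}}=\|y\|_Y$ since $\|f_{n_0}\|_{L^2(X)}=1$. The computation above yields $\langle F, G_{m,n}\rangle_{\mathcal{L}}=\langle y, g_m\rangle_Y\,\delta_{n_0,n}=0$ for all $m,n$, so completeness of $\{G_{m,n}\}$ forces $F=0$ and hence $y=0$; thus $\{g_m\}$ is complete.

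The only genuinely delicate step is the quantifier interchange in the first completeness direction: the hypothesis produces, for \emph{each} $m$ separately, a null set off which $\langle F(x),g_m\rangle_Y$ vanishes, and I need these to vanish \emph{simultaneously} for a.e.\ $x$ in order to apply completeness of $\{g_m\}$ pointwise in $Y$. This is exactly where countability of the index set $\{m\}$ (available because $Y$ admits a countable orthonormal basis) is essential, as a countable union of null sets is null. Everything else is routine bookkeeping with the Cauchy--Schwarz bound that keeps $h_m$ and $F$ inside the relevant $L^2$ spaces.
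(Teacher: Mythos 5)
Your proposal is correct and follows essentially the same route as the paper: the same inner-product factorization handles orthonormality, the same auxiliary scalar functions $h_m(x)=\langle F(x),g_m\rangle_Y$ (the paper's $A_n$) handle one completeness direction, and the same test functions $f_{n_0}(x)\,y$ (the paper's $B_n$) handle the other. Your explicit remark about discarding a countable union of null sets to get simultaneous vanishing of all $\langle F(x),g_m\rangle_Y$ is a point the paper passes over silently, and is a worthwhile clarification.
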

 
     \begin{proof}  For any $m, n$ and $m\rq{}, n\rq{}$ we have the following. 
     
    \begin{align}\label{orthogonality-relation}
    \langle G_{m,n}, G_{m\rq{},n\rq{}} \rangle_{\mathcal L}  &= \int_X \langle f_m(x) g_n, f_{m\rq{}}(x) g_{n\rq{}} \rangle_Y \  d\mu(x)\\\notag
    & = \langle f_m,f_{m\rq{}}\rangle_{L^2(X)} \langle g_n, g_{n\rq{}}\rangle_Y \\\notag
    &=\delta_{m,m\rq{}}  \langle g_n, g_{n\rq{}}\rangle_Y
    \end{align}
 
 This shows  that 
the orthogonality of  $\{G_{m,n}\}_{m,n}$  is equivalent to the orthogonality of $\{g_m\}_m$. 
  And,  $\|G_{m,n}\| =1$ if and only if $\|g_n\|=1$.  \\
  
  Let $\{g_m\}_m$ be an orthonormal basis for $Y$. 
To prove the completeness of  $\{G_{m,n}\}$ in  $\mathcal L$, let  
 $F\in  \mathcal L$ such that $\langle F, G_{m,n}\rangle_{\mathcal L}  =0$, $\forall \ m, n$. We show that $F=0$.     By the definition of inner product we have 

\begin{align}\label{inner-product}
0=\langle F, G_{m,n}\rangle_{\mathcal L} &=\int_X \langle F(x), G_{m,n}(x)\rangle_Y d\mu(x)\\\notag 
&= \int_X \langle F(x), f_m(x) g_n\rangle_Y d\mu(x) \\\notag
&= \int_X \langle F(x), g_n\rangle_Y \overline{f_m(x)} d\mu(x) \\\notag
&= \langle A_n, f_m\rangle
\end{align}
where  
$$A_n: X\to \CC; \ \  x \mapsto \langle F(x), g_n\rangle_Y. $$
$A_n$ is a measurable function and lies in $L^2(X)$ with $\|A_n\|\leq  \|F\|$. Since $\langle A_n, f_m\rangle_{L^2(X)}=0$ for all  
 $m$ (see above), then  $A_n=0$ by the completeness of $\{f_m\}$.  From the other hand, by  the definition of $A_n$  we have    $\langle F(x), g_n\rangle_{Y}=0$ for a.e. $x\in X$.   Since   $\{g_n\}$   is complete in  $Y$,  then $F(x)= 0 $ for a.e. $x\in X$, as we desired.    \\
   
   Conversely, assume that $\{G_{m,n}\}_{m,n}$ is an orthonormal basis for the Hilbert space $\mathcal L$. Therefore by (\ref{orthogonality-relation}), $\{g_m\}$ is an  orthonormal system. We prove that if for $g\in Y$ and $\langle g, g_m\rangle=0$ for all $m$, then $g$ must be zero. For this,  for any $n$ let us define 
   the map 
   
   $$B_n: X\to Y; \ \ x\mapsto f_n(x)g.$$
   Then $B_n$ is measurable and it belongs to $\mathcal L$ with $\|B_n\|_{\mathcal L} = \|g\|_Y$. By expanding $B_n$ in terms of $\{G_{m,n}\}$, 
  \begin{align}
  B_n&=\sum_{n\rq{},m} \langle B_n, G_{m,n\rq{}}\rangle_{\mathcal L} G_{m,n\rq{}}\\\notag
  &= \sum_{n\rq{},m} \langle f_n, f_{n\rq{}}\rangle_{L^2(X)} \langle g, g_m\rangle_Y G_{m,n\rq{}}\\\notag
    &= \sum_{m}   \langle g, g_m\rangle_Y G_{n,m}
 \end{align}
   
   By the assumption that 
   $ \langle g, g_m\rangle_Y=0$, we will have $B_n=0$. This implies that $B_n(x)= f_n(x) g=0$ for a.e.  $x$. Since, $f_n\neq 0$, then  $g$ must be zero, and hence we are done.

\end{proof}

  We conclude this paper with  another description of translate orthonormal bases in terms of {\it vector-valued Gabor systems}  introduced in  Theorem \ref{vector-valued bases}. The result is   a consequence of 
   Lemma \ref{mixed orthonormal bases}  along  Theorem \ref{equivalence}.    First we need a definition here.
   
 \begin{definition} Given $A$, $\Omega$, and $u\in PW_\Omega(\RR^d)$, for any $a, b\in A$,  the vector-valued time-frequency translate of $u$ along $a, b\in A$ is given by $u_{a,b}: \Omega\to PW_\Omega(\RR^d)$, \ $ \alpha\mapsto e_b(\alpha) T_a u$.  The system
 \begin{align}\label{product-base}
 \mathcal G(A, \Omega, u):= \{G_{a,b}: \   G_{a,b}(\alpha):= e_b(\alpha) T_a u, \ a,b\in A\}
 \end{align}
    is called the  corresponding vector-valued Gabor  or {\rm Weyl-Heisenberg} system  for the \lq\lq{}window\rq\rq{} signal $u$. For more on traditional Weyl-Heisenberg frames we refer the reader to \cite{Cas-Chris-Jans01, Casazza-Weyl-Heis}. 
\end{definition}

  \begin{theorem}\label{vector-valued bases} Let $|\Omega|=1$. 
  Define $\mathcal L= L^2(\Omega, PW_\Omega)$ to  be the Hilbert space of all measurable functions $F: \Omega \to PW_\Omega$ with finite    norm  $\|F\|_{\mathcal L}:=\int_\Omega \|F(\alpha)\|_{L^2(\RR^d)}^2  d\alpha$.  
 Then for any $u\in PW_\Omega$,  $\{T_au\}_{a\in A}$ is  an orthonormal basis for $PW_\Omega$ if and only if 
the vector-valued Gabor  system   $\mathcal G(A, \Omega, u)$ is an orthonormal basis for $\mathcal L$, provided that 
 $\hat u$ is  nowhere zero on $\Omega$. 
  \end{theorem}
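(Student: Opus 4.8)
The plan is to read each element of the vector-valued Gabor system as a \emph{product} of a scalar exponential on $\Omega$ with a fixed translate of $u$, and then to apply Lemma \ref{mixed orthonormal bases} to this product structure. Since $G_{a,b}(\alpha)=e_b(\alpha)\,T_a u$, I would set $X=\Omega$ (with Lebesgue measure), $Y=PW_\Omega$, take the scalar family $\{f_b\}_{b\in A}=\{e_b\}_{b\in A}\subset L^2(\Omega)$ and the vector family $\{g_a\}_{a\in A}=\{T_a u\}_{a\in A}\subset PW_\Omega$. With these identifications $G_{a,b}(\alpha)=f_b(\alpha)\,g_a$ is precisely the array appearing in Lemma \ref{mixed orthonormal bases}, so the lemma will read: \emph{provided} $\{e_b\}_{b\in A}$ is an orthonormal basis for $L^2(\Omega)$, the system $\mathcal G(A,\Omega,u)$ is an orthonormal basis for $\mathcal L$ if and only if $\{T_a u\}_{a\in A}$ is an orthonormal basis for $PW_\Omega$.

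The hypothesis of the lemma, namely that $\{e_b\}_{b\in A}$ is itself an orthonormal basis for $L^2(\Omega)$, is exactly what Theorem \ref{equivalence} provides: since $|\Omega|=1$ and $\hat u$ has modulus one on $\Omega$ (the normalization under which Theorem \ref{equivalence} applies), that theorem gives the equivalence ``$\{T_a u\}_{a\in A}$ is an ONB for $PW_\Omega$'' $\Longleftrightarrow$ ``$\{e_a\}_{a\in A}$ is an ONB for $L^2(\Omega)$''. For the forward direction I would then assume $\{T_a u\}$ is an ONB for $PW_\Omega$, use Theorem \ref{equivalence} to upgrade this to $\{e_a\}$ being an ONB for $L^2(\Omega)$, which is exactly the hypothesis Lemma \ref{mixed orthonormal bases} demands, and conclude from the lemma that $\mathcal G(A,\Omega,u)$ is an ONB for $\mathcal L$.

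For the converse I cannot quote the lemma at once, so I would first recover orthonormality of $\{e_b\}$ from the factorization of the Gram matrix. The computation in (\ref{orthogonality-relation}) gives $\langle G_{a,b},G_{a',b'}\rangle_{\mathcal L}=\langle e_b,e_{b'}\rangle_{L^2(\Omega)}\,\langle T_a u,T_{a'}u\rangle_{PW_\Omega}$; specializing to $a=a'$ and using $\|T_a u\|=\|u\|\neq 0$ together with $|\Omega|=1$ forces $\{e_b\}$ to be orthonormal in $L^2(\Omega)$. Completeness of $\{e_b\}$ follows from a rank-one test: if $h\in L^2(\Omega)$ is orthogonal to every $e_b$, then $F(\alpha):=h(\alpha)\,w$ for a fixed nonzero $w\in PW_\Omega$ is orthogonal to every $G_{a,b}$, whence $F=0$ and $h=0$. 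Thus $\{e_b\}$ is an ONB for $L^2(\Omega)$, Lemma \ref{mixed orthonormal bases} applies, and it returns that $\{T_a u\}$ is an ONB for $PW_\Omega$.

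The step I expect to be the main obstacle is securing the lemma's hypothesis that $\{e_b\}$ is orthonormal with respect to \emph{Lebesgue} measure on $\Omega$. On the Fourier side $\widehat{T_a u}=e_a\hat u$, so orthonormality of $\{T_a u\}$ says only that $\{e_a\}$ is orthonormal for the weighted measure $|\hat u|^2\,d\xi$; to pass to Lebesgue orthonormality one needs $|\hat u|=1$ a.e. on $\Omega$, which is why Theorem \ref{equivalence} must be invoked under the modulus-one normalization rather than the bare nowhere-vanishing assumption. Verifying that the stated hypotheses indeed deliver this normalization is the delicate point on which the forward direction rests; the remainder is the routine bilinear bookkeeping of (\ref{orthogonality-relation}) and the rank-one completeness test above.
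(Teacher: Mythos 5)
Your proposal follows essentially the same route as the paper's proof: the forward direction transfers orthonormality of $\{T_a u\}_{a\in A}$ to orthonormality of $\{e_a\}_{a\in A}$ via the equivalence theorem of Section~\ref{Introduction} and then invokes Lemma~\ref{mixed orthonormal bases} with $X=\Omega$, $Y=PW_\Omega$, $f_b=e_b$, $g_a=T_au$, which is exactly the paper's factorization. The only divergence is in the converse: you first extract orthonormality of $\{e_b\}$ from the Gram factorization and completeness from the rank-one test $F(\alpha)=h(\alpha)w$, and then feed Lemma~\ref{mixed orthonormal bases} back to recover $\{T_au\}$; the paper instead argues directly on $\{T_a u\}$, getting its orthogonality from $\langle G_{a,b},G_{a',b}\rangle_{\mathcal L}$ and its completeness from the rank-one functions $W_b(\alpha)=e_b(\alpha)g$. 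These are mirror images of one another and both are sound.

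The obstacle you flag at the end is genuine, and the paper does not resolve it either. Under the stated hypotheses ($u\in PW_\Omega$, $\hat u$ nowhere zero, $|\Omega|=1$), orthonormality of $\{T_a u\}$ only says that $\{e_a\}$ is orthonormal and complete with respect to the weighted measure $|\hat u(\xi)|^2\,d\xi$ on $\Omega$. The paper's proof asserts that Theorem~\ref{equivalence2} then makes $\{e_a\}$ an orthonormal basis for $L^2(\Omega,d\xi)$, but Theorem~\ref{equivalence2} is a Riesz-basis statement: it requires $0<\|\hat u\|_0\leq\|\hat u\|_\infty<\infty$ (which \emph{nowhere vanishing} does not imply) and in any case delivers only a Riesz basis, not orthonormality. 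The hypothesis under which both your argument and the paper's close cleanly is $|\hat u|=1$ a.e.\ on $\Omega$, i.e.\ the normalization of Theorem~\ref{equivalence}. With only the \lq\lq{}nowhere zero\rq\rq{} assumption the forward direction has a gap that you correctly identified but, like the paper, did not fill.
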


  \begin{proof}  Let $u\in PW_\Omega$ with  $\hat u$   nowhere zero on $\Omega$. Let $\{T_{a}u\}_{a\in A}$ be an orthonormal basis for $PW_\Omega$.  Then by Theorem \ref{equivalence2}, the exponentials $\{e_a\}$ form an orthonormal basis for $L^2(\Omega)$, and hence 
  $\mathcal G(A,\Omega, u)$  given in  (\ref{product-base}) is an orthonormal basis for $\mathcal L$
  by  Lemma \ref{mixed orthonormal bases}. 
To prove the converse, let  $\mathcal G(A, \Omega, u)$ be an orthonormal basis for $\mathcal L$.   The orthogonality of the family $\{ T_a u\}$ follows from the following simple relation: For  $a, a' \in A$   and any $b\in A$
    $$ \delta_{a,a'}= \langle G_{a,b} , G_{a',b}\rangle_{\mathcal L}  = \langle T_a u, T_{a'} u\rangle_{L^2(\RR^d)}.$$ 
    
  To show     $\{ T_a u\}$  spans  $PW_\Omega$, let $g\in PW_\Omega$ such that $\langle g, T_a u\rangle_{L^2(\RR^d)}= 0$ for all $a\in A$.  We  show that $g=0$. For this,  fix $b$ and define $$W_b: \Omega\to PW_\Omega; \ \alpha\mapsto   e_b(\alpha)  g. $$
  It is clear that $W_b\in \mathcal L$ with $\|W_b\|_{\mathcal L}= \|g\|_{L^2(\RR^d)}$, and  for any $a, b'$ we have  
  
 \begin{align}\label{ortho}
  \langle W_b, G_{a,b\rq{}}\rangle_{\mathcal L} = \int_\Omega \langle g  e_b(\alpha) ,  e_{b\rq{}}(\alpha) T_a u \rangle_{L^2(\RR^d)}  \ d\alpha= \langle g, T_au\rangle_{L^2(\RR^d)} \int_\Omega e_b(\alpha)\overline{e_{b\rq{}}(\alpha)}d\alpha .
 \end{align} 
 
 Since 
 $\langle g, T_au\rangle_{L^2(\RR^d)}=0$ and
  the system  $\mathcal G(A,\Omega, u)$ is complete in $\mathcal L$,  then   (\ref{ortho}) implies that   $W_b (\alpha) = e_b(\alpha) g=0$ for almost every $\alpha$. But  $e_b\neq 0$, therefore $g$ must be zero, and hence we finish the proof. 
  \end{proof}
   
We conclude this paper with the following remark. 

\begin{remark} A special case of 
 Theorem \ref{vector-valued bases} is when $A=\ZZ^d$. This case has already been studied, e.g., in 
  \cite{B}. The author  showed that 
a necessary and sufficient condition for $\{T_ju\}_{j\in \ZZ^d}$ to be an orthonormal basis for its span is that  for almost every $x\in [0,1]^d$ 
 
 \begin{align}\label{periodic}
 \sum_{j\in \ZZ^d} |\hat u(x+j)|^2 = 1
 \end{align} 
 
 As we mentioned earlier, the similar techniques used in  \cite{B} do not apply to our situation where  $A$ is any random set. Therefore in our paper  we used different   techniques to overcome the problem and could prove a characterization of translate bases (ONB, Riesz, frame) in terms of exponential bases on a domain.  
     For the equivalent conditions  for orthonormal bases,  frame, and Riesz basses of discrete  translations   in terms of the periodic function $x\mapsto \sum_{j\in \ZZ^d}  |\hat u(x+j)|^2$     on $\Bbb R^n$ and locally compact abelian  groups we refer to \cite{B} respectfully  \cite{BHM}.  Relevant  results for  these bases in non-commutative settings, in particular, the Heisenberg group,   have   recently   been  shown  by the second author et  al.  in   \cite{BHM, CMO}. 
  
 \end{remark}
 
  \section*{References}

\end{document}